\numberwithin{equation}{section}
\theoremstyle{plain}
\newtheorem{theorem}{Theorem}[section]
\newtheorem{corollary}[theorem]{Corollary}
\newtheorem{lemma}[theorem]{Lemma}
\theoremstyle{remark}
\newtheorem{remark}[theorem]{Remark}
\theoremstyle{definition}
\title{Quadratic Approximation of Generalized Tribonacci Sequences}
\author[G. Cerda-Morales]{GAMALIEL\ CERDA-MORALES}
\date{}
\begin{document}
\maketitle

\vspace{-20pt}
\begin{center}
{\footnotesize Instituto de Matem\'aticas, Pontificia Universidad Cat\'olica de Valpara\'iso, Blanco Viel 596, Cerro Bar\'on, Valpara\'iso, Chile. \\
E-mails: gamaliel.cerda.m@mail.pucv.cl 
}\end{center}

\vspace{5pt}

\begin{abstract}
In this paper, we give quadratic approximation of generalized Tribonacci sequence $\{V_{n}\}_{n\geq0}$ defined by Eq. (\ref{eq:7}) and use this result to give the matrix form of the $n$-th power of a companion matrix of $\{V_{n}\}_{n\geq0}$. Then we re-prove the cubic identity or Cassini-type formula for $\{V_{n}\}_{n\geq0}$ and the Binet's formula of the generalized Tribonacci quaternions.
\end{abstract}

\medskip
\noindent
\subjclass{\footnotesize {\bf Mathematical subject classification:} 
Primary: 11B39; Secondary: 15A24, 40C05.}

\medskip
\noindent
\keywords{\footnotesize {\bf Key words:} Binet's formula, companion matrix, generalized Tribonacci sequence, Narayana number, Padovan number, quadratic approximation, Tribonacci number.}
\medskip

\vspace{5pt}

\section{Introduction}\label{sec:1}
\setcounter{equation}{0}

Let $Q=\left(
\begin{array}{ccc}
1& 1& 1 \\ 
1& 0 & 0 \\ 
0 & 1& 0%
\end{array}%
\right)$ be a companion matrix of the Tribonacci sequence $\{T_{n}\}_{n\geq0}$ defined by the third-order linear recurrence relation
\begin{equation}\label{eq:1}
T_{0}=T_{1}=0,\ T_{2}=1,\ T_{n}=T_{n-1}+T_{n-2}+T_{n-3}\ (n\geq3).
\end{equation}
Then, by an inductive argument (\cite{Spi}, \cite{Wa}), the $n$-th power $Q^{n}$ has the matrix form
\begin{equation}\label{eq:2}
Q^{n}=\left(
\begin{array}{ccc}
T_{n+2}& T_{n+1}+T_{n}& T_{n+1} \\ 
T_{n+1}&T_{n}+T_{n-1}& T_{n} \\ 
T_{n} & T_{n-1}+T_{n-2}& T_{n-1}%
\end{array}%
\right)\ (n\geq2).
\end{equation}
This property provides an alternate proof of the Cassini-type formula for $\{T_{n}\}_{n\geq0}$:
\begin{equation}\label{eq:3}
T_{n}^{3}+T_{n-1}^{2}T_{n+2}+T_{n-2}T_{n+1}^{2}-2T_{n-1}T_{n}T_{n+1}-T_{n-2}T_{n}T_{n+2}=1.
\end{equation}

Now, let's think of the other access method in order to give the matrix form Eq. (\ref{eq:2}) of $Q^{n}$. This method give the motivation of our research. That is, our research is based on the following observation: It is well known [5] that the usual Tribonacci numbers can be expressed using Binet's formula
\begin{equation}\label{eq:4}
T_{n}=\frac{\alpha^{n}}{(\alpha-\omega_{1})(\alpha-\omega_{2})}-\frac{\omega_{1}^{n}}{(\alpha-\omega_{1})(\omega_{1}-\omega_{2})}+\frac{\omega_{2}^{n}}{(\alpha-\omega_{2})(\omega_{1}-\omega_{2})}.
\end{equation}
where $\alpha$, $\omega_{1}$ and $\omega_{2}$ are the roots of the cubic equation $x^{3}-x^{2}-x-1=0$. Furthermore, $\alpha=\frac{1}{3}+A_{T}+B_{T}$, $\omega_{1}=\frac{1}{3}+\epsilon A_{T}+\epsilon^{2} B_{T}$ and $\omega_{2}=\frac{1}{3}+\epsilon^{2}A_{T}+\epsilon B_{T}$, where $$A_{T}=\sqrt[3]{\frac{19}{27}+\sqrt{\frac{11}{27}}},\ B_{T}=\sqrt[3]{\frac{19}{27}-\sqrt{\frac{11}{27}}},$$ and $\epsilon=-\frac{1}{2}+\frac{i\sqrt{3}}{2}$.  

From the Binet's formula Eq. (\ref{eq:4}), using the classic identities $\alpha+\omega_{1}+\omega_{2}=1$, $\alpha\omega_{1}+\alpha\omega_{2}+\omega_{1}\omega_{2}=-1$, we have for any integer $n\geq2$:
\begin{align*}
\alpha T_{n}&+(1+\omega_{1}\omega_{2})T_{n-1}+T_{n-2}\\
&=\frac{\alpha^{n-2}(\alpha^{3}+(1+\omega_{1}\omega_{2})\alpha+1)}{(\alpha-\omega_{1})(\alpha-\omega_{2})}-\frac{\omega_{1}^{n-2}(\alpha\omega_{1}^{2}+(1+\omega_{1}\omega_{2})\omega_{1}+1)}{(\alpha-\omega_{1})(\omega_{1}-\omega_{2})}\\
&\ \ +\frac{\omega_{2}^{n-2}(\alpha\omega_{2}^{2}+(1+\omega_{1}\omega_{2})\omega_{2}+1)}{(\alpha-\omega_{2})(\omega_{1}-\omega_{2})}\\
&=\alpha^{n-1}.
\end{align*}
Then, we obtain 
\begin{equation}\label{eq:5}
\alpha T_{n}+(1+\omega_{1}\omega_{2})T_{n-1}+T_{n-2}=\alpha^{n-1}\ (n\geq2).
\end{equation}
Multipling Eq. (\ref{eq:5}) by $\alpha$, using $\alpha\omega_{1}\omega_{2}=1$, and if we change $\alpha$, $\omega_{1}$ and $\omega_{2}$ role above process, we obtain the quadratic approximation of $\{T_{n}\}_{n\geq0}$
\begin{equation}\label{eq:6}
\textrm{Quadratic app. of $\{T_{n}\}$}: \left\{
\begin{array}{c }
\alpha^{n}=T_{n}\alpha^{2}+(T_{n-1}+T_{n-2})\alpha+T_{n-1},\\
\omega_{1}^{n}=T_{n}\omega_{1}^{2}+(T_{n-1}+T_{n-2})\omega_{1}+T_{n-1},\\
\omega_{2}^{n}=T_{n}\omega_{2}^{2}+(T_{n-1}+T_{n-2})\omega_{2}+T_{n-1},
\end{array}%
\right.
\end{equation}
where $\alpha$, $\omega_{1}$ and $\omega_{2}$ are the roots of the cubic equation $x^{3}-x^{2}-x-1=0$.

In Eq. (\ref{eq:6}), if we change $\alpha$, $\omega_{1}$ and $\omega_{2}$ into the companion matrix $Q$ and change $T_{n-1}$ into the matrix $T_{n-1}I$, where $I$ is the $3\times 3$ identity matrix, then we obtain the matrix form Eq. (\ref{eq:2}) of $Q^{n}$
$$Q^{n}=T_{n}Q^{2}+(T_{n-1}+T_{n-2})Q+T_{n-1}I\left(=\left(
\begin{array}{ccc}
T_{n+2}& T_{n+1}+T_{n}& T_{n+1} \\ 
T_{n+1}&T_{n}+T_{n-1}& T_{n} \\ 
T_{n} & T_{n-1}+T_{n-2}& T_{n-1}%
\end{array}%
\right)\right).$$

The Tribonacci sequence has been generalized in many ways, for example, by changing the recurrence relation while preserving the initial terms, by altering the initial terms but maintaining the recurrence relation, by combining of these two techniques, and so on (for more details see \cite{Ge,Pe,Sha,Wa}).

In this paper, we consider one type of generalized Tribonacci sequences. In fact, the sequence $\{V_{n}\}_{n\geq0}$ defined by Shannon and Horadam \cite{Sha} depending on six positive integer parameters $V_{0}$, $V_{1}$, $V_{2}$, $r$, $s$ and $t$ used in the third-order linear recurrence relation:
\begin{equation}\label{eq:7}
V_{n}=rV_{n-1}+sV_{n-2}+tV_{n-3}\ \ (n\geq3).
\end{equation}

In this paper, as mentioned above, we provide quadratic approximation of $\{V_{n}\}_{n\geq0}$ and use this result to give the matrix form of the $n$-th power of a companion matrix of $\{V_{n}\}_{n\geq0}$. Then, we re-prove the Cassini-type formula for the sequence $\{V_{n}\}_{n\geq0}$ and Binet's formula of the generalized Tribonacci quaternions.

\section{Quadratic approximation of the generalized Tribonacci sequences $\{V_{n}\}$}
\setcounter{equation}{0}

We consider the generalized Tribonacci sequence $\{V_{n}(V_{0},V_{1},V_{2};r,s,t)\}$, or briefly $\{V_{n}\}$, defined as in (\ref{eq:7}), where $V_{0}$, $V_{1}$, $V_{2}$ are arbitrary integers and $r$, $s$, $t$, are real numbers. This sequence has been studied by Shannon and Horadam \cite{Sha}, Yalavigi \cite{Ya} and Pethe \cite{Pe}. If we set $r=s=t=1$ and $V_{0}=V_{1}=0$, $V_{2}=1$, then $\{V_{n}\}$ is the well-known Tribonacci sequence which has been considered extensively (see, for example, \cite{Fe}). 

As the elements of this Tribonacci-type number sequence provide third order iterative relation, its characteristic equation is $x^{3}-rx^{2}-sx-t=0$, whose roots are $\alpha=\alpha(r,s,t)=\frac{r}{3}+A_{V}+B_{V}$, $\omega_{1}=\frac{r}{3}+\epsilon A_{V}+\epsilon^{2} B_{V}$ and $\omega_{2}=\frac{r}{3}+\epsilon^{2}A_{V}+\epsilon B_{V}$, where $$A_{V}=\sqrt[3]{\frac{r^{3}}{27}+\frac{rs}{6}+\frac{t}{2}+\sqrt{\Delta}},\ B_{V}=\sqrt[3]{\frac{r^{3}}{27}+\frac{rs}{6}+\frac{t}{2}-\sqrt{\Delta}},$$ with $\Delta=\Delta(r,s,t)=\frac{r^{3}t}{27}-\frac{r^{2}s^{2}}{108}+\frac{rst}{6}-\frac{s^{3}}{27}+\frac{t^{2}}{4}$ and $\epsilon=-\frac{1}{2}+\frac{i\sqrt{3}}{2}$. 

In this paper, $\Delta(r,s,t)>0$, then the cubic equation $x^{3}-rx^{2}-sx-t=0$ has one real and two nonreal solutions, the latter being conjugate complex. Thus, the Binet formula for the generalized Tribonacci numbers can be expressed as:
\begin{equation}\label{eq:8}
V_{n}=\frac{P\alpha^{n}}{(\alpha-\omega_{1})(\alpha-\omega_{2})}-\frac{Q\omega_{1}^{n}}{(\alpha-\omega_{1})(\omega_{1}-\omega_{2})}+\frac{R\omega_{2}^{n}}{(\alpha-\omega_{2})(\omega_{1}-\omega_{2})},
\end{equation}
where $P=V_{2}-(\omega_{1}+\omega_{2})V_{1}+\omega_{1}\omega_{2}V_{0}$, $Q=V_{2}-(\alpha+\omega_{2})V_{1}+\alpha\omega_{2}V_{0}$ and $R=V_{2}-(\alpha+\omega_{1})V_{1}+\alpha\omega_{1}V_{0}$.

In fact, the generalized Tribonacci sequence is the generalization of the well-known sequences like Tribonacci, Padovan, Narayana and third-order Jacobsthal. For example, $\{V_{n}(0,0,1;1,1,1)\}_{n\geq0}$, $\{V_{n}(0,1,0;0,1,1)\}_{n\geq0}$, are Tribonacci and Padovan sequences, respectively. The Binet formula for the generalized Tribonacci sequence is expressed as follows:
\begin{lemma}
The Binet formula for the generalized Tribonacci sequence $\{V_{n}\}_{n\geq0}$ is:
\begin{equation}\label{eq:9}
V_{n}=V_{2}U_{n}+(sV_{1}+tV_{0})U_{n-1}+tV_{1}U_{n-2},
\end{equation}
and
\begin{equation}\label{eq:10}
U_{n}=\frac{\alpha^{n}}{(\alpha-\omega_{1})(\alpha-\omega_{2})}-\frac{\omega_{1}^{n}}{(\alpha-\omega_{1})(\omega_{1}-\omega_{2})}+\frac{\omega_{2}^{n}}{(\alpha-\omega_{2})(\omega_{1}-\omega_{2})},
\end{equation}
where $\alpha$, $\omega_{1}$ and $\omega_{2}$ are the roots of the cubic equation $x^{3}-rx^{2}-sx-t=0$.
\end{lemma}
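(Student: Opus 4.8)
The plan is to derive the compact representation (\ref{eq:9})--(\ref{eq:10}) directly from the general Binet formula (\ref{eq:8}), which is already available, by matching the coefficients of the three exponential terms. First I would record that the sequence $U_{n}$ defined by (\ref{eq:10}) is exactly the fundamental solution of the recurrence, namely the case $V_{n}(0,0,1;r,s,t)$: clearing the common denominator $(\alpha-\omega_{1})(\alpha-\omega_{2})(\omega_{1}-\omega_{2})$ in (\ref{eq:10}) for $n=0,1,2$ leaves Vandermonde-type numerators that give $U_{0}=U_{1}=0$ and $U_{2}=1$. This is not logically needed for the coefficient-matching argument below, which uses only that $\alpha,\omega_{1},\omega_{2}$ solve the characteristic equation, but it explains why the recombination collapses and it underlies the alternative verification.

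Next I would substitute (\ref{eq:10}) for each of $U_{n}$, $U_{n-1}$, $U_{n-2}$ into the right-hand side of (\ref{eq:9}) and regroup over the three basis fractions
$$\frac{\alpha^{n}}{(\alpha-\omega_{1})(\alpha-\omega_{2})},\quad \frac{\omega_{1}^{n}}{(\alpha-\omega_{1})(\omega_{1}-\omega_{2})},\quad \frac{\omega_{2}^{n}}{(\alpha-\omega_{2})(\omega_{1}-\omega_{2})}.$$
Because $U_{n-1}$ contributes the factor $\alpha^{-1}$ and $U_{n-2}$ the factor $\alpha^{-2}$ to the first fraction, its coefficient is $V_{2}+(sV_{1}+tV_{0})\alpha^{-1}+tV_{1}\alpha^{-2}$. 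To reproduce (\ref{eq:8}) I must show this equals $P=V_{2}-(\omega_{1}+\omega_{2})V_{1}+\omega_{1}\omega_{2}V_{0}$, and symmetrically that the coefficients of the $\omega_{1}$- and $\omega_{2}$-fractions equal $Q$ and $R$.

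The crux is this coefficient identity. Using Vieta's relations $\omega_{1}+\omega_{2}=r-\alpha$ and $\omega_{1}\omega_{2}=t/\alpha$, which follow from $\alpha+\omega_{1}+\omega_{2}=r$ and $\alpha\omega_{1}\omega_{2}=t$, I rewrite $P=V_{2}-(r-\alpha)V_{1}+(t/\alpha)V_{0}$. Forming the difference with the coefficient above, the $V_{2}$- and $V_{0}$-terms cancel outright, and after multiplying by $\alpha^{2}$ the $V_{1}$-terms collapse to $-V_{1}(\alpha^{3}-r\alpha^{2}-s\alpha-t)$, which vanishes since $\alpha$ is a root of the characteristic equation. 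The same reduction with $\alpha$ replaced by $\omega_{1}$ and then $\omega_{2}$ yields the matches with $Q$ and $R$.

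I expect the only genuine obstacle to be the symmetric bookkeeping: $P$, $Q$ and $R$ are permutations of one another in the roles of $\alpha,\omega_{1},\omega_{2}$, so the pairing between each basis fraction and its matching constant must be tracked carefully; once the $\alpha$-coefficient is reduced to the characteristic polynomial, the remaining two require no new idea. As an independent check one may instead verify (\ref{eq:9}) at $n=0,1,2$, using $U_{-1}=1/t$ and $U_{-2}=-s/t^{2}$ from the backward recurrence, and note that both sides satisfy $x_{n}=rx_{n-1}+sx_{n-2}+tx_{n-3}$, so that agreement at three consecutive indices forces equality for all $n\geq0$.
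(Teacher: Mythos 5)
Your proof is correct, but your primary argument is genuinely different from the paper's. The paper's own proof is a two-line remark: it identifies $\{U_{n}\}$ as the fundamental sequence $\{V_{n}(0,0,1;r,s,t)\}$, so that Eq. (\ref{eq:10}) is just the special case $P=Q=R=1$ of Eq. (\ref{eq:8}), and then asserts that Eq. (\ref{eq:9}) ``can be confirmed using the recurrence relation'' --- that is, the right-hand side of (\ref{eq:9}) is a fixed linear combination of shifts of a solution of the recurrence, hence itself a solution, and it agrees with $V_{n}$ at three consecutive indices. This is precisely the argument you relegate to your closing ``independent check,'' including the values $U_{-1}=1/t$ and $U_{-2}=-s/t^{2}$ needed to handle $n=0,1$. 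What you do instead is match coefficients against the stated Binet formula (\ref{eq:8}): expanding $V_{2}U_{n}+(sV_{1}+tV_{0})U_{n-1}+tV_{1}U_{n-2}$ over the three basis fractions and using Vieta's relations $\omega_{1}+\omega_{2}=r-\alpha$, $\omega_{1}\omega_{2}=t/\alpha$, you reduce the identification of each coefficient with $P$, $Q$, $R$ to the vanishing of $\alpha^{3}-r\alpha^{2}-s\alpha-t$ (and its analogues at $\omega_{1},\omega_{2}$); I checked this cancellation and it is exactly as you describe. Your route buys an explanation the paper's proof does not give: it shows \emph{why} the combination $V_{2}$, $sV_{1}+tV_{0}$, $tV_{1}$ appears, namely these are the unique weights turning the partial-fraction constants of $U_{n}$ into $P$, $Q$, $R$; it also proves (\ref{eq:9}) for all $n\geq0$ in one stroke rather than by induction. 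The cost is that both your main argument (division by the roots) and your check (the values of $U_{-1},U_{-2}$) tacitly require $t\neq0$; this hypothesis is equally implicit in the paper's reading of (\ref{eq:9}) at $n=0,1$, but since the paper elsewhere allows $r,s,t$ to be arbitrary reals with $\Delta>0$, it would be worth stating explicitly.
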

\begin{proof}
The validity of this formula can be confirmed using the recurrence relation. Furthermore, $\{U_{n}\}_{n\geq0}=\{V_{n}(0,0,1;r,s,t)\}_{n\geq0}$.
\end{proof}

In \cite{Sha}, using an inductive argument, authors give the matrix form of the $n$-th power of a companion matrix $M=\left(
\begin{array}{ccc}
r& s& t \\ 
1& 0 & 0 \\ 
0 & 1& 0%
\end{array}%
\right)$ of $\{V_{n}\}_{n\geq0}$
\begin{equation}\label{eq:11}
\left(
\begin{array}{c}
V_{n+2} \\
V_{n+1} \\ 
V_{n}%
\end{array}%
\right)=\left(
\begin{array}{ccc}
r& s& t \\ 
1& 0 & 0 \\ 
0 & 1& 0%
\end{array}%
\right)^{n}\left(
\begin{array}{c}
V_{2} \\
V_{1} \\ 
V_{0}%
\end{array}%
\right)
\end{equation}
and 
\begin{equation}\label{eq:12}
\left(
\begin{array}{ccc}
r& s& t \\ 
1& 0 & 0 \\ 
0 & 1& 0%
\end{array}%
\right)^{n}=\left(
\begin{array}{ccc}
U_{n+2}& sU_{n+1}+tU_{n}& tU_{n+1} \\ 
U_{n+1}&sU_{n}+tU_{n-1}& tU_{n} \\ 
U_{n} & sU_{n-1}+tU_{n-2}& tU_{n-1}%
\end{array}%
\right)\ (n\geq2),
\end{equation}
where $U_{n}$ is defined by Eq. (\ref{eq:10}). 

And then give the Cassini-type identity for $\{U_{n}\}$ by taking determinant both sides of the matrix form Eq. (\ref{eq:12})
\begin{equation}\label{eq:13}
U_{n}^{3}+U_{n-1}^{2}U_{n+2}+U_{n-2}U_{n+1}^{2}-2U_{n-1}U_{n}U_{n+1}-U_{n-2}U_{n}U_{n+2}=t^{n-2},
\end{equation}
for $n\geq2$.

More generally,
\begin{equation}\label{eq:14}
\left(
\begin{array}{ccc} 
V_{n+4}& V_{n+3}+V_{n+2}& V_{n+3} \\ 
V_{n+3}& V_{n+2}+V_{n+1}& V_{n+2} \\ 
V_{n+2} & V_{n+1}+V_{n}& V_{n+1}
\end{array}%
\right) =M^{n}\left(
\begin{array}{ccc}
V_{4}& V_{3}+V_{2}& V_{3} \\ 
V_{3}& V_{2}+V_{1}& V_{2} \\ 
V_{2} & V_{1}+V_{0}& V_{1}\end{array}%
\right),
\end{equation}
for $n\geq0$. Or equivalently, we can write the Cassini-type identity for $\{V_{n}\}$ by taking determinant both sides of the matrix form Eq. (\ref{eq:14})
\begin{equation}\label{eq:15}
V_{n+2}^{3}+V_{n+1}^{2}V_{n+4}+V_{n}V_{n+3}^{2}-V_{n+2}(2V_{n+1}V_{n+3}+V_{n}V_{n+4})=t^{n}g(0),
\end{equation}
where $g(0)=V_{2}^{3}+V_{1}^{2}V_{4}+V_{0}V_{3}^{2}-V_{2}(2V_{1}V_{3}+V_{0}V_{4})$.

Now, we give the quadratic approximation of $\{V_{n}\}$ and then use this result to obtain the matrix form Eq. (\ref{eq:14}).
\begin{theorem}\label{teoP}
Let $\{V_{n}\}_{n\geq0}$, $\alpha$, $\omega_{1}$ and $\omega_{2}$ be as above. Then, we have for all integer $n\geq 0$
\begin{equation}\label{eq:16}
\textrm{Quadratic app. of $\{V_{n}\}$}: \left\{
\begin{array}{c }
P\alpha^{n+2}=\alpha^{2} V_{n+2}+\alpha(sV_{n+1}+tV_{n})+ tV_{n+1},\\
Q\omega_{1}^{n+2}=\omega_{1}^{2} V_{n+2}+\omega_{1}(sV_{n+1}+tV_{n})+ tV_{n+1},\\
R\omega_{2}^{n+2}=\omega_{2}^{2} V_{n+2}+\omega_{2}(sV_{n+1}+tV_{n})+ tV_{n+1},
\end{array}%
\right.
\end{equation}
where $P=V_{2}-(\omega_{1}+\omega_{2})V_{1}+\omega_{1}\omega_{2}V_{0}$, $Q=V_{2}-(\alpha+\omega_{2})V_{1}+\alpha\omega_{2}V_{0}$ and $R=V_{2}-(\alpha+\omega_{1})V_{1}+\alpha\omega_{1}V_{0}$. 
\end{theorem}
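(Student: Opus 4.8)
The plan is to establish only the first identity,
$P\alpha^{n+2}=\alpha^{2}V_{n+2}+\alpha(sV_{n+1}+tV_{n})+tV_{n+1}$;
the remaining two then follow verbatim under the cyclic substitution
$\alpha\mapsto\omega_{1}\mapsto\omega_{2}\mapsto\alpha$, which carries
$P\mapsto Q\mapsto R\mapsto P$. Indeed, each of $\alpha,\omega_{1},\omega_{2}$ is a root of
$x^{3}-rx^{2}-sx-t=0$, and the coefficients $P,Q,R$ are defined symmetrically in the two
complementary roots (e.g. $P$ involves $\omega_{1},\omega_{2}$). Throughout I would use
Vieta's relations $\alpha+\omega_{1}+\omega_{2}=r$,
$\alpha\omega_{1}+\alpha\omega_{2}+\omega_{1}\omega_{2}=-s$,
$\alpha\omega_{1}\omega_{2}=t$, together with $\alpha^{3}=r\alpha^{2}+s\alpha+t$.

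I would argue by induction on $n$. For the base case $n=0$, expand $P\alpha^{2}$ using
$P=V_{2}-(\omega_{1}+\omega_{2})V_{1}+\omega_{1}\omega_{2}V_{0}$ and compare with
$\alpha^{2}V_{2}+\alpha(sV_{1}+tV_{0})+tV_{1}$. The coefficient of $V_{2}$ matches trivially;
matching the coefficients of $V_{0}$ and $V_{1}$ reduces the claim to two elementary facts.
The $V_{0}$-coefficient gives $\alpha^{2}\omega_{1}\omega_{2}=t\alpha$, i.e.
$\alpha\omega_{1}\omega_{2}=t$; and the $V_{1}$-coefficient gives
$-\alpha^{2}(\omega_{1}+\omega_{2})=s\alpha+t$, which after substituting
$\omega_{1}+\omega_{2}=r-\alpha$ becomes exactly $\alpha^{3}-r\alpha^{2}-s\alpha-t=0$.

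For the inductive step, assuming the identity for $n$, multiply it by $\alpha$ to obtain
$P\alpha^{n+3}=\alpha^{3}V_{n+2}+\alpha^{2}(sV_{n+1}+tV_{n})+t\alpha V_{n+1}$, and replace
$\alpha^{3}$ by $r\alpha^{2}+s\alpha+t$. Expanding and regrouping, the terms
$s\alpha V_{n+2}$, $t\alpha V_{n+1}$ and $tV_{n+2}$ reconstitute the desired right-hand side
$\alpha^{2}V_{n+3}+\alpha(sV_{n+2}+tV_{n+1})+tV_{n+2}$, and what is left over is
$\alpha^{2}(rV_{n+2}+sV_{n+1}+tV_{n})=\alpha^{2}V_{n+3}$, which is precisely the recurrence
(\ref{eq:7}) times $\alpha^{2}$. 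This closes the induction.

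The computations are all routine and there is no deep obstacle; the only point requiring care
is the base-case bookkeeping, namely keeping the signs in Vieta's relations straight and
correctly pairing $P$ with the root $\alpha$. As an alternative, non-inductive route one could
substitute the Binet formula (\ref{eq:8}) directly into the right-hand side and collect the
coefficients of $\alpha^{m}$, $\omega_{1}^{m}$, $\omega_{2}^{m}$; there the work is to verify
that the $\omega_{1}$- and $\omega_{2}$-brackets vanish (each collapsing to the characteristic
equation at the third root) while the $\alpha$-bracket equals
$\alpha^{n+2}(\alpha-\omega_{1})(\alpha-\omega_{2})$, which is the same computation in disguise.
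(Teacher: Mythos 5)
Your proposal is correct, and its main route is genuinely different from the paper's. The paper proves the first identity by substituting the Binet formula (\ref{eq:8}) into the combination $\alpha V_{n+2}+(s+\omega_{1}\omega_{2})V_{n+1}+tV_{n}$, observing that the $\omega_{1}$- and $\omega_{2}$-brackets vanish (each being the characteristic polynomial relation $\alpha\omega_{i}^{2}+(s+\omega_{1}\omega_{2})\omega_{i}+t=0$), so the combination collapses to $P\alpha^{n+1}$; multiplying by $\alpha$ and using $\alpha\omega_{1}\omega_{2}=t$ gives Eq.\ (\ref{eq:16}), and the other two lines follow by permuting the roots. This is exactly the ``alternative, non-inductive route'' you sketch in your last paragraph, so you have in effect reconstructed the paper's argument as a side remark. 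Your primary argument --- induction on $n$ --- is sound: the base case reduces, as you say, to $\alpha\omega_{1}\omega_{2}=t$ and to $-\alpha^{2}(\omega_{1}+\omega_{2})=s\alpha+t$, which via $\omega_{1}+\omega_{2}=r-\alpha$ is the characteristic equation; the inductive step multiplies by $\alpha$, replaces $\alpha^{3}$ by $r\alpha^{2}+s\alpha+t$, and regroups using the recurrence (\ref{eq:7}). Your cyclic substitution $\alpha\mapsto\omega_{1}\mapsto\omega_{2}\mapsto\alpha$, $P\mapsto Q\mapsto R\mapsto P$ is legitimate because every ingredient you use (Vieta's relations, the characteristic equation, the recurrence) is preserved by it. What your induction buys is independence from the Binet formula: the paper's proof needs Eq.\ (\ref{eq:8}), whose denominators $(\alpha-\omega_{1})(\alpha-\omega_{2})(\omega_{1}-\omega_{2})$ require the roots to be distinct (guaranteed here by the standing assumption $\Delta>0$), whereas your argument makes sense verbatim for arbitrary $r,s,t$ with $t\neq0$, repeated roots included. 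What the paper's route buys is brevity given that Binet is already established, and it makes transparent why the constants $P,Q,R$ are precisely the numerators appearing in the Binet formula.
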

\begin{proof}
Using the Binet's formula Eq. (\ref{eq:8}), we have
\begin{align*}
\alpha V_{n+2}&+(s+\omega_{1}\omega_{2})V_{n+1}+tV_{n}\\
&=\frac{P\alpha^{n}(\alpha^{3}+(s+\omega_{1}\omega_{2})\alpha+t)}{(\alpha-\omega_{1})(\alpha-\omega_{2})}-\frac{Q\omega_{1}^{n}(\alpha\omega_{1}^{2}+(s+\omega_{1}\omega_{2})\omega_{1}+t)}{(\alpha-\omega_{1})(\omega_{1}-\omega_{2})}\\
&\ \ +\frac{R\omega_{2}^{n}(\alpha\omega_{2}^{2}+(s+\omega_{1}\omega_{2})\omega_{2}+t)}{(\alpha-\omega_{2})(\omega_{1}-\omega_{2})}\\
&=(V_{2}-(\omega_{1}+\omega_{2})V_{1}+\omega_{1}\omega_{2}V_{0})\alpha^{n+1},
\end{align*}
the latter given that $\alpha\omega_{1}^{2}+(s+\omega_{1}\omega_{2})\omega_{1}+t=0$ and $\alpha\omega_{2}^{2}+(s+\omega_{1}\omega_{2})\omega_{2}+t=0$. Then, we get
\begin{equation}\label{eq:17}
\alpha V_{n+2}+(s+\omega_{1}\omega_{2})V_{n+1}+tV_{n}=(V_{2}-(\omega_{1}+\omega_{2})V_{1}+\omega_{1}\omega_{2}V_{0})\alpha^{n+1}.
\end{equation}
Multiplying Eq. (\ref{eq:17}) by $\alpha$ and using $\alpha\omega_{1}\omega_{2}=t$, we have 
\begin{align*}
P\alpha^{n+2}&=\alpha^{2}V_{n+2}+\alpha(s+\omega_{1}\omega_{2})V_{n+1}+\alpha tV_{n}\\
&=\alpha^{2} V_{n+2}+\alpha(sV_{n+1}+tV_{n})+ tV_{n+1},
\end{align*}
where $P=V_{2}-(\omega_{1}+\omega_{2})V_{1}+\omega_{1}\omega_{2}V_{0}$. If we change $\alpha$, $\omega_{1}$ and $\omega_{2}$ role above process, we obtain the desired result Eq. (\ref{eq:16}).
\end{proof}

We can re-prove equations Eq. (\ref{eq:14}) and Eq. (\ref{eq:15}) by using the above quadratic approximation of $\{V_{n}\}$ in Eq. (\ref{eq:16}).
\begin{corollary}
Let $M=\left(
\begin{array}{ccc}
r& s& t \\ 
1& 0 & 0 \\ 
0 & 1& 0%
\end{array}%
\right)$ be a companion matrix of $\{V_{n}\}$. Then the matrix form of the $n$-th power $M^{n}$ is given by Eq. (\ref{eq:14}) and the Cassini-type formula for $\{V_{n}\}$ is given by Eq. (\ref{eq:15}).
\end{corollary}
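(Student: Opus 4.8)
The plan is to use Theorem~\ref{teoP} as the engine that produces the explicit $n$-th power of $M$, and then to read off Eq.~(\ref{eq:14}) and Eq.~(\ref{eq:15}) from it. The decisive observation is that the auxiliary sequence $\{U_n\}_{n\geq0}=\{V_n(0,0,1;r,s,t)\}_{n\geq0}$ has $V_0=V_1=0$ and $V_2=1$, so the coefficients in Theorem~\ref{teoP} collapse to $P=Q=R=1$. Specializing the quadratic approximation Eq.~(\ref{eq:16}) to this sequence therefore yields the three scalar identities $\lambda^{n+2}=U_{n+2}\lambda^{2}+(sU_{n+1}+tU_{n})\lambda+tU_{n+1}$, holding simultaneously for $\lambda=\alpha,\omega_{1},\omega_{2}$.

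The first real step is to promote these scalar identities to one matrix identity. I would consider the polynomial $f(x)=x^{n+2}-U_{n+2}x^{2}-(sU_{n+1}+tU_{n})x-tU_{n+1}$; the three identities say precisely that $f$ vanishes at $\alpha$, $\omega_{1}$ and $\omega_{2}$. Since the standing hypothesis $\Delta>0$ forces these roots of $c(x)=x^{3}-rx^{2}-sx-t$ to be pairwise distinct (one real, two conjugate complex), $c$ divides $f$. A direct expansion of $\det(xI-M)$ shows that $c(x)$ is exactly the characteristic polynomial of $M$, so Cayley--Hamilton gives $c(M)=0$, and writing $f=c\cdot q$ yields $f(M)=c(M)q(M)=0$. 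Hence $M^{n+2}=U_{n+2}M^{2}+(sU_{n+1}+tU_{n})M+tU_{n+1}I$, equivalently $M^{n}=U_{n}M^{2}+(sU_{n-1}+tU_{n-2})M+tU_{n-1}I$ for $n\geq2$. Inserting the explicit entries of $M$ and $M^{2}$ and simplifying with $U_{m+2}=rU_{m+1}+sU_{m}+tU_{m-1}$ reproduces Eq.~(\ref{eq:12}) by routine matrix arithmetic.

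With $M^{n}$ now known explicitly, I would deduce Eq.~(\ref{eq:14}) in the form $W_{n}=M^{n}W_{0}$, where $W_{0}$ is the initial-value matrix on the right. The clean way is to write $v_{m}=(V_{m+2},V_{m+1},V_{m})^{\top}$; the recurrence (\ref{eq:7}) gives $Mv_{m}=v_{m+1}$, hence $M^{n}v_{m}=v_{m+n}$. The three columns of the left-hand matrix of Eq.~(\ref{eq:14}) are $v_{n+2}$, $v_{n+1}+v_{n}$ and $v_{n+1}$, while the columns of $W_{0}$ are $v_{2}$, $v_{1}+v_{0}$ and $v_{1}$, so applying $M^{n}$ column by column gives the claim; the same conclusion follows by multiplying $W_{0}$ by the explicit $M^{n}$ and collapsing the $U$-combinations via the Binet relation Eq.~(\ref{eq:9}). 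Finally Eq.~(\ref{eq:15}) comes from taking determinants: $\det M=t$ gives $\det(M^{n})=t^{n}$, so the right-hand side has determinant $t^{n}g(0)$ with $g(0)=\det W_{0}$, while expanding the $3\times3$ determinant of $W_{n}$ produces the Cassini expression $V_{n+2}^{3}+V_{n+1}^{2}V_{n+4}+V_{n}V_{n+3}^{2}-V_{n+2}(2V_{n+1}V_{n+3}+V_{n}V_{n+4})$.

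The main obstacle is the scalar-to-matrix promotion in the second paragraph: three eigenvalue identities are genuinely scalar statements, and they force the matrix relation only because the three roots are distinct (so that $c\mid f$, not merely that $f$ and $c$ share one root) and because $c$ is exactly the characteristic polynomial annihilated by $M$. I would therefore be careful to record the distinctness of $\alpha,\omega_{1},\omega_{2}$ under $\Delta>0$ and the identification of $c(x)$ with $\det(xI-M)$ before invoking Cayley--Hamilton; the surrounding matrix and determinant manipulations are then routine.
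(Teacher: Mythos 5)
Your proposal is correct, and it reaches Eq.~(\ref{eq:14}) and Eq.~(\ref{eq:15}) by a genuinely different route than the paper. The paper works with the general sequence $\{V_n\}$ throughout: it substitutes $M$ for the roots in Eq.~(\ref{eq:16}) to assert the matrix identity Eq.~(\ref{eq:18}), supports this by checking that both sides agree when applied to the single vector $(V_2,V_1,V_0)^{\top}$, and then factors each side as the corresponding matrix of Eq.~(\ref{eq:14}) times the auxiliary matrix $\left(\begin{smallmatrix}1&0&0\\ 0&t&0\\ 0&s-t&t\end{smallmatrix}\right)$, cancelling this factor (which requires $t\neq 0$ for invertibility) before taking determinants. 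You instead specialize Theorem~\ref{teoP} to $\{U_n\}$ so that $P=Q=R=1$, promote the three scalar identities to the matrix identity $M^{n+2}=U_{n+2}M^{2}+(sU_{n+1}+tU_{n})M+tU_{n+1}I$ via distinctness of the roots, divisibility by the characteristic polynomial, and Cayley--Hamilton, and then obtain Eq.~(\ref{eq:14}) from the elementary column identity $M^{n}v_{m}=v_{m+n}$. Your route buys two concrete things: (i) the divisibility/Cayley--Hamilton step is a complete proof of the scalar-to-matrix promotion, whereas the paper's verification on one vector does not by itself imply equality of matrices (agreement on $(V_2,V_1,V_0)^{\top}$ suffices only when that vector is cyclic for $M$, a point the paper does not address); (ii) the column argument needs no cancellation of an auxiliary factor, so the hypothesis $t\neq0$ is not needed for Eq.~(\ref{eq:14}). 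What the paper's route emphasizes, and yours gives up, is the direct use of the quadratic approximation of the general $\{V_n\}$ --- its Eq.~(\ref{eq:18}) is literally the matrix avatar of Eq.~(\ref{eq:16}) --- whereas in your argument the approximation enters only through the special case $\{U_n\}$, and indeed your proofs of Eq.~(\ref{eq:14}) and Eq.~(\ref{eq:15}) would go through even without Theorem~\ref{teoP}.
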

\begin{proof}
In Eq. (\ref{eq:16}), if we change $\alpha$, $\omega_{1}$ and $\omega_{2}$ into the matrix $M$ and change $tV_{n+1}$ into the matrix $tV_{n+1}I$, then we have
\begin{equation}\label{eq:18}
M^{n}(V_{2}M^{2}+(sV_{1}+tV_{0})M+tV_{1}I)=V_{n+2}M^{2}+(sV_{n+1}+tV_{n})M+tV_{n+1}I.
\end{equation}
In fact, Eq. (\ref{eq:18}) holds for the following reason: Since $$M\left(
\begin{array}{c}
V_{n+1} \\
V_{n} \\ 
V_{n-1}%
\end{array}%
\right)=\left(
\begin{array}{c}
V_{n+2} \\
V_{n+1} \\ 
V_{n}%
\end{array}%
\right)\ \textrm{and}\ M^{n}\left(
\begin{array}{c}
V_{2} \\
V_{1} \\ 
V_{0}%
\end{array}%
\right)=\left(
\begin{array}{c}
V_{n+2} \\
V_{n+1} \\ 
V_{n}%
\end{array}%
\right),$$ we have
\begin{align*}
M^{n}(V_{2}M^{2}&+(sV_{1}+tV_{0})M+tV_{1}I)\left(
\begin{array}{c}
V_{2} \\
V_{1} \\ 
V_{0}%
\end{array}%
\right)\\
&=V_{2}M^{n+2}\left(
\begin{array}{c}
V_{2} \\
V_{1} \\ 
V_{0}%
\end{array}%
\right)+(sV_{1}+tV_{0})M^{n+1}\left(
\begin{array}{c}
V_{2} \\
V_{1} \\ 
V_{0}%
\end{array}%
\right)+tV_{1}M^{n}\left(
\begin{array}{c}
V_{2} \\
V_{1} \\ 
V_{0}%
\end{array}%
\right)\\
&=V_{2}\left(
\begin{array}{c}
V_{n+4} \\
V_{n+3} \\ 
V_{n+2}%
\end{array}%
\right)+(sV_{1}+tV_{0})\left(
\begin{array}{c}
V_{n+3} \\
V_{n+2} \\ 
V_{n+1}%
\end{array}%
\right)+tV_{1}\left(
\begin{array}{c}
V_{n+2} \\
V_{n+1} \\ 
V_{n}%
\end{array}%
\right)\\
&=\left(
\begin{array}{c}
V_{2}V_{n+4}+(sV_{1}+tV_{0})V_{n+3}+tV_{1}V_{n+2} \\
V_{2}V_{n+3}+(sV_{1}+tV_{0})V_{n+2}+tV_{1}V_{n+1} \\ 
V_{2}V_{n+2}+(sV_{1}+tV_{0})V_{n+1}+tV_{1}V_{n}%
\end{array}%
\right).
\end{align*}
Using $V_{n+3}=rV_{n+2}+sV_{n+1}+tV_{n}$ and $V_{n+4}=(r^{2}+s)V_{n+2}+(rs+t)V_{n+1}+rtV_{n}$, we have
\begin{align*}
M^{n}(V_{2}M^{2}&+(sV_{1}+tV_{0})M+tV_{1}I)\left(
\begin{array}{c}
V_{2} \\
V_{1} \\ 
V_{0}%
\end{array}%
\right)\\
&=\left(
\begin{array}{c}
V_{4}V_{n+2}+(sV_{n+1}+tV_{n})V_{3}+tV_{n+1}V_{2} \\
V_{3}V_{n+2}+(sV_{n+1}+tV_{n})V_{2}+tV_{n+1}V_{1} \\ 
V_{2}V_{n+2}+(sV_{n+1}+tV_{n})V_{1}+tV_{n+1}V_{0}%
\end{array}%
\right)\\
&=(V_{n+2}M^{2}+(sV_{n+1}+tV_{n})M+tV_{n+1}I)\left(
\begin{array}{c}
V_{2} \\
V_{1} \\ 
V_{0}%
\end{array}%
\right).
\end{align*}
Thus from Eq. (\ref{eq:18}) we have
\begin{align*}
M^{n}(V_{2}M^{2}+(sV_{1}+&tV_{0})M+tV_{1}I)=M^{n}\left(
\begin{array}{ccc}
V_{4}&sV_{3}+tV_{2}&tV_{3} \\
V_{3}&sV_{2}+tV_{1}&tV_{2}\\ 
V_{2}&sV_{1}+tV_{0}&tV_{1}%
\end{array}%
\right)\\
&=M^{n}\left(
\begin{array}{ccc}
V_{4}&V_{3}+V_{2}&V_{3} \\
V_{3}&V_{2}+V_{1}&V_{2}\\ 
V_{2}&V_{1}+V_{0}&V_{1}%
\end{array}%
\right)\left(
\begin{array}{ccc}
1&0&0 \\
0&t&0\\ 
0&s-t&t%
\end{array}%
\right)
\end{align*}
and
\begin{align*}
V_{n+2}M^{2}+(sV_{n+1}&+tV_{n})M+tV_{n+1}I\\
&=\left(
\begin{array}{ccc}
V_{n+4}&sV_{n+3}+tV_{n+2}&tV_{n+3} \\
V_{n+3}&sV_{n+2}+tV_{n+1}&tV_{n+2}\\ 
V_{n+2}&sV_{n+1}+tV_{n}&tV_{n+1}%
\end{array}%
\right)\\
&=\left(
\begin{array}{ccc}
V_{n+4}&V_{n+3}+V_{n+2}&V_{n+3} \\
V_{n+3}&V_{n+2}+V_{n+1}&V_{n+2}\\ 
V_{n+2}&V_{n+1}+V_{n}&V_{n+1}%
\end{array}%
\right)\left(
\begin{array}{ccc}
1&0&0 \\
0&t&0\\ 
0&s-t&t%
\end{array}%
\right).
\end{align*}
Since the matrix $\left(
\begin{array}{ccc}
1&0&0 \\
0&t&0\\ 
0&s-t&t%
\end{array}%
\right)$ is invertible, we obtain the desired result Eq. (\ref{eq:14}) and by taking determinant both sides of the matrix form Eq. (\ref{eq:14}) we obtain the desired result Eq. (\ref{eq:15}). The proof is completed.
\end{proof}

\begin{remark}
For some positive integer $k$, if $r=k$, $s=0$ and $t=1$, then $\{U_{n}\}$ is the $k$-Narayana sequence $\{b_{k,n}\}$ (for more details see \cite{Ra}). 

It is well known that the usual $k$-Narayana numbers can be expressed using Binet's formula
\begin{equation}\label{eq:Nara}
\begin{aligned}
b_{k,n}&=\frac{\alpha_{k}^{n}}{(\alpha_{k}-\omega_{k,1})(\alpha_{k}-\omega_{k,2})}-\frac{\omega_{k,1}^{n}}{(\alpha_{k}-\omega_{k,1})(\omega_{k,1}-\omega_{k,2})}\\
&\ \ +\frac{\omega_{k,2}^{n}}{(\alpha_{k}-\omega_{k,2})(\omega_{k,1}-\omega_{k,2})}.
\end{aligned}
\end{equation}
where $\alpha_{k}$, $\omega_{k,1}$ and $\omega_{k,2}$ are the roots of the cubic equation $x^{3}-kx^{2}-1=0$. Furthermore, $\alpha_{k}=\frac{k}{3}+A_{b}+B_{b}$, $\omega_{k,1}=\frac{k}{3}+\epsilon A_{b}+\epsilon^{2} B_{b}$ and $\omega_{k,2}=\frac{k}{3}+\epsilon^{2}A_{b}+\epsilon B_{b}$, where $$A_{b}=\sqrt[3]{\frac{k^{3}}{27}+\frac{1}{2}+\sqrt{\frac{k^{3}}{27}+\frac{1}{4}}},\ B_{b}=\sqrt[3]{\frac{k^{3}}{27}+\frac{1}{2}-\sqrt{\frac{k^{3}}{27}+\frac{1}{4}}},$$ and $\epsilon=-\frac{1}{2}+\frac{i\sqrt{3}}{2}$. 

From the Binet's formula Eq. (\ref{eq:Nara}), using the identities $\alpha_{k}+\omega_{k,1}+\omega_{k,2}=k$, $\alpha_{k}\omega_{k,1}+\alpha_{k}\omega_{k,2}+\omega_{k,1}\omega_{k,2}=0$, we have for any integer $n\geq2$:

\begin{align*}
\alpha_{k} b_{k,n+2}&+(\omega_{k,1}\omega_{k,2})b_{k,n+1}+b_{k,n}\\
&=\frac{\alpha_{k}^{n}(\alpha_{k}^{3}+(\omega_{k,1}\omega_{k,2})\alpha_{k}+1)}{(\alpha_{k}-\omega_{k,1})(\alpha_{k}-\omega_{k,2})}-\frac{\omega_{k,1}^{n}(\alpha_{k}\omega_{k,1}^{2}+(\omega_{k,1}\omega_{k,2})\omega_{k,1}+1)}{(\alpha_{k}-\omega_{k,1})(\omega_{k,1}-\omega_{k,2})}\\
&\ \ +\frac{\omega_{k,2}^{n}(\alpha_{k}\omega_{k,2}^{2}+(\omega_{k,1}\omega_{k,2})\omega_{k,2}+1)}{(\alpha_{k}-\omega_{k,2})(\omega_{k,1}-\omega_{k,2})}\\
&=\alpha_{k}^{n+1},
\end{align*}
the latter given that $\alpha_{k}\omega_{k,1}^{2}+(\omega_{k,1}\omega_{k,2})\omega_{k,1}+1=0$ and $\alpha_{k}\omega_{k,2}^{2}+(\omega_{k,1}\omega_{k,2})\omega_{k,2}+1=0$. Then, we get
\begin{equation}\label{eq:new}
\alpha_{k} b_{k,n+2}+(\omega_{k,1}\omega_{k,2})b_{k,n+1}+b_{k,n}=\alpha_{k}^{n+1}.
\end{equation}

Multipling Eq. (\ref{eq:new}) by $\alpha_{k}$, using $\alpha_{k}\omega_{k,1}\omega_{k,2}=1$, and if we change $\alpha_{k}$, $\omega_{k,1}$ and $\omega_{k,2}$ role above process, we obtain the quadratic approximation of $\{b_{k,n}\}_{n\geq0}$
\begin{equation}\label{eq:6}
\textrm{Quadratic app. of $\{b_{k,n}\}$}: \left\{
\begin{array}{c }
\alpha_{k}^{n}=b_{k,n}\alpha_{k}^{2}+b_{k,n-2}\alpha+b_{k,n-1},\\
\omega_{k,1}^{n}=b_{k,n}\omega_{k,1}^{2}+b_{k,n-2}\omega_{k,1}+b_{k,n-1},\\
\omega_{k,2}^{n}=b_{k,n}\omega_{k,2}^{2}+b_{k,n-2}\omega_{k,2}+b_{k,n-1},
\end{array}%
\right.
\end{equation}
where $\alpha_{k}$, $\omega_{k,1}$ and $\omega_{k,2}$ are the roots of the cubic equation $x^{3}-x^{2}-x-1=0$.

In Eq. (\ref{eq:6}), if we change $\alpha$, $\omega_{1}$ and $\omega_{2}$ into the companion matrix $Q$ and change $T_{n-1}$ into the matrix $T_{n-1}I$, where $I$ is the $3\times 3$ identity matrix, then we obtain the matrix form Eq. (\ref{eq:2}) of $Q^{n}$
$$Q^{n}=b_{k,n}Q^{2}+b_{k,n-2}Q+b_{k,n-1}I\left(=\left(
\begin{array}{ccc}
b_{k,n+2}& b_{k,n}& b_{k,n+1} \\ 
b_{k,n+1}& b_{k,n-1}& b_{k,n} \\ 
b_{k,n} & b_{k,n-2}& b_{k,n-1}%
\end{array}%
\right)\right),$$
where $Q=\left(
\begin{array}{ccc}
k& 0& 1 \\ 
1& 0& 0\\ 
0 & 1& 0%
\end{array}%
\right)$.
\end{remark}

The next corollary gives an alternative proof of the Binet's formula for the generalized Tribonacci quaternions (see \cite[Theorem 2.1]{Ce1}).
\begin{corollary}
For any integer $n\geq 0$, the $n$-th generalized Tribonacci quaternion is
\begin{equation}\label{binQ}
Q_{V,n}=\frac{P\underline{\alpha}\alpha^{n}}{(\alpha-\omega_{1})(\alpha-\omega_{2})}-\frac{Q\underline{\omega_{1}}\omega_{1}^{n}}{(\alpha-\omega_{1})(\omega_{1}-\omega_{2})}+\frac{R\underline{\omega_{2}}\omega_{2}^{n}}{(\alpha-\omega_{2})(\omega_{1}-\omega_{2})},
\end{equation}
where $P$, $Q$ and $R$ as in Eq. (\ref{eq:8}), $\underline{\alpha}=1+\alpha \textbf{i}+\alpha^{2} \textbf{j}+\alpha^{3} \textbf{k}$ and $\underline{\omega_{1,2}}=1+\omega_{1,2} \textbf{i}+\omega_{1,2}^{2} \textbf{j}+\omega_{1,2}^{3} \textbf{k}$. If $V_{0}=V_{1}=0$ and $V_{2}=1$, we get the classic Tribonacci quaternion. 
\end{corollary}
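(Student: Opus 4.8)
The plan is to start from the definition of the $n$-th generalized Tribonacci quaternion. Following the quaternionic convention used for such sequences (as in \cite{Ce1}), this is $Q_{V,n}=V_{n}+V_{n+1}\textbf{i}+V_{n+2}\textbf{j}+V_{n+3}\textbf{k}$, so that $Q_{V,n}$ is a quaternion-coefficient linear combination of the four consecutive terms $V_{n},V_{n+1},V_{n+2},V_{n+3}$. Since each of these obeys the same Binet expansion Eq. (\ref{eq:8}) with a shifted exponent, my plan is simply to feed Eq. (\ref{eq:8}) into each of the four components and then collect three geometric-type sums, one for each characteristic root. No new input beyond Eq. (\ref{eq:8}) and linearity is needed.

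Concretely, I would write
\begin{equation*}
Q_{V,n}=\sum_{m=0}^{3}V_{n+m}\,e_{m},\qquad e_{0}=1,\ e_{1}=\textbf{i},\ e_{2}=\textbf{j},\ e_{3}=\textbf{k},
\end{equation*}
and substitute Eq. (\ref{eq:8}) for each $V_{n+m}$. Interchanging the (finite) order of summation groups the terms by root: the $\alpha$-contribution is
\begin{equation*}
\frac{P}{(\alpha-\omega_{1})(\alpha-\omega_{2})}\sum_{m=0}^{3}\alpha^{n+m}e_{m}=\frac{P\alpha^{n}}{(\alpha-\omega_{1})(\alpha-\omega_{2})}\bigl(1+\alpha\textbf{i}+\alpha^{2}\textbf{j}+\alpha^{3}\textbf{k}\bigr),
\end{equation*}
and the inner factor is exactly $\underline{\alpha}$. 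The analogous collection for $\omega_{1}$ and $\omega_{2}$ produces $\underline{\omega_{1}}$ and $\underline{\omega_{2}}$ together with the matching denominators and signs, and summing the three contributions yields Eq. (\ref{binQ}) verbatim.

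The one point that needs care — and the only place the argument is more than bookkeeping — is that the scalars $\alpha,\omega_{1},\omega_{2},P,Q,R$ are in general complex, whereas the $V_{n+m}$ are real; so the manipulation must be read inside the complexified quaternion algebra $\mathbb{C}\otimes\mathbb{H}$, where the complex coefficients commute with $\textbf{i},\textbf{j},\textbf{k}$. In that setting the factoring-out of $\alpha^{n}$ and the identification of $\underline{\alpha}$ are legitimate. That the right-hand side is again a genuine quaternion, so that nothing is lost on returning to $\mathbb{H}$, is automatic since it equals the real object $Q_{V,n}$ by construction; equivalently, the $\omega_{1}$- and $\omega_{2}$-terms are complex conjugates whose non-real parts cancel. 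Finally, for the specialization $V_{0}=V_{1}=0,\ V_{2}=1$ (with $r=s=t=1$) I would check directly from Eq. (\ref{eq:8}) that $P=Q=R=1$, whence Eq. (\ref{binQ}) collapses to the classical Tribonacci-quaternion Binet formula. I expect no genuine obstacle here: the result is a direct consequence of Eq. (\ref{eq:8}) and linearity, and the only thing to get exactly right is the clean recognition of the quaternionic factors $\underline{\alpha},\underline{\omega_{1}},\underline{\omega_{2}}$.
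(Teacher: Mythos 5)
Your proof is correct, but it is not the paper's proof; in fact it bypasses the very tool the paper is showcasing. You substitute Binet's formula Eq. (\ref{eq:8}) into each component of $Q_{V,n}=V_{n}+V_{n+1}\textbf{i}+V_{n+2}\textbf{j}+V_{n+3}\textbf{k}$, swap the two finite sums, and recognize the factors $\underline{\alpha},\underline{\omega_{1}},\underline{\omega_{2}}$ --- a pure linearity argument, and essentially the original derivation in \cite{Ce1}, to which this corollary is explicitly offered as an \emph{alternative} proof. The paper instead applies its quadratic approximation (Theorem \ref{teoP}) componentwise to obtain the three quaternionic identities
\begin{equation*}
\alpha^{2} Q_{V,n+2}+\alpha\left(sQ_{V,n+1}+tQ_{V,n}\right)+tQ_{V,n+1}=P\underline{\alpha}\alpha^{n+2},
\end{equation*}
and the analogues for $\omega_{1},\omega_{2}$, then eliminates the unwanted terms by subtracting these equations pairwise and dividing by the root differences, isolating $Q_{V,n+2}$. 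What your route buys is brevity and directness: it needs only Eq. (\ref{eq:8}), it proves the formula immediately for every $n\geq0$ (the paper's elimination produces $Q_{V,n+2}$, so strictly it covers indices $\geq2$ unless one notes the approximation also holds at the lower indices), and your remark that all manipulations live in the complexified quaternion algebra $\mathbb{C}\otimes\mathbb{H}$, where complex scalars commute with $\textbf{i},\textbf{j},\textbf{k}$, is a point of rigor the paper leaves implicit (and needs equally). What the paper's route buys is the demonstration that its new machinery --- the quadratic approximation --- suffices by itself to recover the known Binet formula, which is the stated purpose of the corollary; your proof, while cleaner, does not exercise Theorem \ref{teoP} at all. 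Your closing check that $P=Q=R=1$ when $V_{0}=V_{1}=0$, $V_{2}=1$ is correct and matches the paper's claim about the classic Tribonacci quaternion.
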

\begin{proof}
For the Eq. (\ref{eq:16}), we have
\begin{align*}
&\alpha^{2} Q_{V,n+2}+\alpha \left( sQ_{V,n+1}+tQ_{V,n}\right) +tQ_{V,n+1}\\
&=\alpha^{2} \left(V_{n+2}+V_{n+3}\textbf{i}+V_{n+4} \textbf{j}+V_{n+5}\textbf{k}\right)\\
&\ \ +\alpha \left(sV_{n+1}+tV_{n} +(sV_{n+2}+tV_{n+1})\textbf{i}+(sV_{n+3}+tV_{n+2})\textbf{j}+(sV_{n+4}+tV_{n+3})\textbf{k}\right)\\
&\ \ +t \left(V_{n+1}+V_{n+2}\textbf{i}+V_{n+3} \textbf{j}+V_{n+4}\textbf{k}\right)\\
&=\alpha^{2} V_{n+2}+\alpha \left( sV_{n+1}+tV_{n}\right) +tV_{n+1}+\left(\alpha^{2} V_{n+3}+\alpha \left( sV_{n+2}+tV_{n+1}\right) +tV_{n+2}\right)\textbf{i}\\
&\ \ + \left(\alpha^{2} V_{n+4}+\alpha \left( sV_{n+3}+tV_{n+2}\right) +tV_{n+3}\right)\textbf{j}\\
&\ \ +\left(\alpha^{2} V_{n+5}+\alpha \left( sV_{n+4}+tV_{n+3}\right) +tV_{n+4}\right)\textbf{k}.
\end{align*}

From the identity $P\alpha^{n+2}=\alpha^{2} V_{n+2}+\alpha(sV_{n+1}+tV_{n})+ tV_{n+1}$ in Theorem \ref{teoP}, we obtain
\begin{equation}\label{ge1}
\alpha^{2} Q_{V,n+2}+\alpha \left( sQ_{V,n+1}+tQ_{V,n}\right) +tQ_{V,n+1}=P\underline{\alpha}\alpha^{n+2}.
\end{equation}
Similarly, we have
\begin{equation}\label{ge2}
\omega_{1}^{2} Q_{V,n+2}+\omega_{1} \left( sQ_{V,n+1}+tQ_{V,n}\right) +tQ_{V,n+1}=Q\underline{\omega_{1}}\omega_{1}^{n+2},
\end{equation}
\begin{equation}\label{ge3}
\omega_{2}^{2} Q_{V,n+2}+\omega_{2} \left( sQ_{V,n+1}+tQ_{V,n}\right) +tQ_{V,n+1}=R\underline{\omega_{2}}\omega_{2}^{n+2}.
\end{equation}
Subtracting Eq. (\ref{ge2}) from Eq. (\ref{ge1}) gives 
\begin{equation}\label{d1}
(\alpha+\omega_{1}) Q_{V,n+2}+\left( sQ_{V,n+1}+tQ_{V,n}\right) =\frac{P\underline{\alpha}\alpha^{n+2}-Q\underline{\omega_{1}}\omega_{1}^{n+2}}{\alpha-\omega_{1}}.
\end{equation}
Similarly, subtracting Eq. (\ref{ge3}) from Eq. (\ref{ge1}) gives 
\begin{equation}\label{d2}
(\alpha+\omega_{2}) Q_{V,n+2}+\left( sQ_{V,n+1}+tQ_{V,n}\right) =\frac{P\underline{\alpha}\alpha^{n+2}-R\underline{\omega_{2}}\omega_{2}^{n+2}}{\alpha-\omega_{2}}.
\end{equation}
Finally, subtracting Eq. (\ref{d2}) from Eq. (\ref{d1}), we obtain 
\begin{align*}
Q_{V,n+2}&=\frac{1}{\omega_{1}-\omega_{2}}\left( \frac{P\underline{\alpha}\alpha^{n+2}-Q\underline{\omega_{1}}\omega_{1}^{n+2}}{\alpha-\omega_{1}}-\frac{P\underline{\alpha}\alpha^{n+2}-R\underline{\omega_{2}}\omega_{2}^{n+2}}{\alpha-\omega_{2}}\right)\\
&=\frac{P\underline{\alpha}\alpha^{n+2}}{(\alpha-\omega_{1})(\alpha-\omega_{2})}-\frac{Q\underline{\omega_{1}}\omega_{1}^{n+2}}{(\alpha-\omega_{1})(\omega_{1}-\omega_{2})}+\frac{R\underline{\omega_{2}}\omega_{2}^{n+2}}{(\alpha-\omega_{2})(\omega_{1}-\omega_{2})}.
\end{align*}
 So, the corollary is proved.
\end{proof}

\medskip

\end{document}